\newtheorem{theorem}{Theorem}%[section]
\newtheorem{corollary}[theorem]{Corollary}
\newtheorem*{theorem1*}{Theorem 1}
\newtheorem*{theorem2*}{Theorem 2}
\newtheorem*{conjecture1*}{Conjecture 1}
\newtheorem*{conjecture2*}{Conjecture 2}
\begin{document}

\title{\large{\textbf{CRITICAL AND FLOW-CRITICAL SNARKS COINCIDE}}}

\author{
Edita M\'a\v cajov\' a and Martin \v{S}koviera\\[3mm]
Department of Computer Science\\
Faculty of Mathematics, Physics and Informatics\\
Comenius University\\
842 48 Bratislava, Slovakia\\[2mm]
{\small\tt macajova@dcs.fmph.uniba.sk}\\[-1mm]
{\small\tt skoviera@dcs.fmph.uniba.sk}}

\date{\today}

\maketitle

\begin{abstract}
Over the past twenty years, critical and bicritical snarks have
been appearing in the literature in various forms and in
different contexts. Two main variants of criticality of snarks
have been studied: criticality with respect to the
non-existence of a $3$-edge-colouring and criticality with
respect to the non-existence of a nowhere-zero $4$-flow. In
this paper we show that these two kinds of criticality
coincide, thereby completing previous partial results of de
Freitas et al. [Electron. Notes Discrete Math. 50 (2015),
199--204] and Fiol et al. [ arXiv:1702.07156v1 (2017)].
\end{abstract}

\section{Introduction}

A snark is a connected cubic graph whose edges cannot
be properly coloured with three colours; equivalently, it is a
connected cubic graph that has no nowhere-zero $4$-flow. This
definition follows Cameron et al. \cite{CCW}, Nedela and \v
Skoviera \cite{NS}, \v S\' amal \cite{S-full}, Steffen
\cite{Steffen:class}, and others, rather than the traditional
more restrictive definition that excludes small
cycle-separating edge-cuts and short circuits in order to avoid
trivial cases. As suggested by several authors, the idea of
nontriviality of snarks is rather subtle and seems to be best
captured by various reductions and decompositions of snarks
\cite{CCW, ChS, NS, Steffen:class}. The concept of a snark
reduction is, in turn, closely related to that of criticality
of a snark, which naturally takes one of two forms: criticality
with respect to the non-existence of a $3$-edge-colouring
\cite{BS, ChS, NS} and criticality with respect to the
non-existence of a nowhere-zero $4$-flow \cite{CdSM, dFdSL,
dSL, dSPL}. The purpose of the present paper is to show that
these two types of criticality coincide. Although the discussed
relationship is not complicated, it has been generally
overlooked and, so far, the two types of criticality have been
considered separately, see, for example \v S\' amal
\cite[Section 3.1]{S-full}. Even the most recent survey paper
of Fiol et al. brings only a partial result in this direction
\cite[Theorem~4.5]{FMS}.

The concept of a critical snark first appeared in 1996 in the
work of Nedela and \v Skoviera \cite{NS} within the context of
snark reductions. According to their definition, a snark is
critical if the removal of any two adjacent vertices produces a
$3$-edge-colourable graph, and bicritical if the removal of any
two distinct vertices produces a $3$-edge-colourable graph.

Explicit occurrence of flow-critical snarks in the literature
is of much later date. It first appears in a 2008 paper of da
Silva and Lucchesi \cite{dSL} investigating graphs critical
with respect to the existence of a nowhere-zero $k$-flow for an
arbitrary integer $k\ge 2$. They defined a graph to be
$k$-edge-critical if it does not admit a nowhere-zero $k$-flow
but the graph obtained by the contraction of any edge does.
They further defined a graph to be $k$-vertex-critical if it
does not admit a nowhere-zero $k$-flow but the graph obtained
by the identification of any two distinct vertices does. If we
take into account the fact that contracting an edge has the
same effect on the existence of a nowhere-zero $k$-flow as
identifying its end-vertices, the later two definitions of da
Silva and Lucchesi \cite{dSL}, with $k=4$, can be viewed as
natural counterparts of critical and bicritical snarks of
Nedela and \v Skoviera. Nevertheless, our main result shows
that for snarks flow-criticality does not bring anything
substantially new.

\begin{theorem}
A snark is $4$-edge-critical if and on if it is critical. A
snark is $4$-vertex-critical if and only if it is bicritical.
\end{theorem}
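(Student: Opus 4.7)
The plan is to derive both halves of the theorem from a single bridging lemma: for any snark $G$ and distinct vertices $u, v \in V(G)$, the graph $G^{uv}$ obtained from $G$ by identifying $u$ with $v$ (retaining any resulting loop or parallel edges) admits a nowhere-zero $4$-flow if and only if $G - \{u,v\}$ admits a proper $3$-edge-colouring. Since contracting an edge $e = uv$ has the same effect on the existence of a nowhere-zero $4$-flow as identifying its endpoints (the loop so produced contributes trivially), specialising to adjacent $u, v$ yields the edge-critical/critical equivalence, and allowing arbitrary $u, v$ yields the vertex-critical/bicritical equivalence.

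I would work throughout with $\mathbb{Z}_2\times\mathbb{Z}_2$-flows, i.e., assignments $\phi\colon E\to\mathbb{Z}_2\times\mathbb{Z}_2\setminus\{0\}$ whose boundary $\partial\phi(x)=\sum_{e\ni x}\phi(e)$ vanishes at every vertex; at a cubic vertex this forces three pairwise distinct colours, recovering a proper $3$-edge-colouring. For the direction ``flow $\Rightarrow$ colouring,'' restrict a nowhere-zero flow on $G^{uv}$ to the edges of $G - \{u,v\}$: at each vertex outside $N(u)\cup N(v)$ nothing changes; at each neighbour $x$ of $u$ or $v$ the three colours at $x$ in $G^{uv}$ were already distinct, so the edges remaining at $x$ in $G - \{u,v\}$ are still distinctly coloured. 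For the converse, start with a $3$-edge-colouring $\psi$ of $G - \{u,v\}$ and define $\phi$ on the edges meeting the merged vertex $w$ of $G^{uv}$ by imposing the flow condition at each $x \in N(u)\cup N(v)$: an exclusive neighbour of $u$ or $v$ pins down the new edge colour uniquely as the sum of the two surviving colours at $x$, while a shared neighbour carrying two parallel edges to $w$ leaves exactly two valid nonzero choices.

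The only global check that remains is that the colours assigned to edges at $w$ themselves sum to zero, and this is where the standard boundary identity $\sum_x \partial\psi(x) = 0$ (each edge contributing twice) saves the day: the interior vertices contribute nothing, so the sum over $N(u)\cup N(v)$ vanishes, and by the construction above this sum equals the total colour at $w$. Hence the extension is automatically a valid $\mathbb{Z}_2\times\mathbb{Z}_2$-flow. The main obstacle I anticipate is the clean bookkeeping when $u$ and $v$ share neighbours or are adjacent: these produce parallel edges or a loop at $w$, but loops contribute $0$ to every boundary and are therefore inert, and the parallel-edge situation is already absorbed in the ``shared neighbour'' step of the extension and in the boundary count. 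With the bridging lemma in hand, both assertions of the theorem follow at once.
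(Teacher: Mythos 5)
Your proposal is correct and takes essentially the same route as the paper: the paper's Theorem~\ref{thm:local} proves exactly your bridging lemma, namely the equivalence of $3$-edge-colourability of $G-\{u,v\}$ with the existence of a nowhere-zero $4$-flow on $G/\{u,v\}$, via nowhere-zero $\mathbb{Z}_2\times\mathbb{Z}_2$-flows and the global Kirchhoff identity to force balance at the merged vertex, and then derives both halves of the theorem (together with the remark that contraction and endpoint identification differ only by an inert loop). The only cosmetic difference is that the paper inserts an intermediate step --- a nowhere-zero flow on $G-\{u,v\}$ with dangling edges retained --- where you extend the colouring directly to the merged vertex, but the argument is the same.
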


There have been a number of papers following either of the two
approaches to the criticality; see for example \cite{ChS,
GS,Steffen:strict, Steffen:bicrit} and \cite{CdSM, dSPL,
dFdSL}, respectively. In several other works, critical snarks
have emerged in forms different from those explained above, yet
in all the cases the definitions turn out to be equivalent to
one of those given above. For example, DeVos et al.~\cite{dVNR}
and more recently \v S\' amal \cite{S-full} define a snark to
be critical if the subgraph obtained by the removal of an
arbitrary edge admits a cycle-continuous mapping onto the graph
consisting of two vertices joined by three parallel edges. The
latter condition easily translates to the one requiring the
existence of a nowhere-zero $\mathbb{Z}_2\times
\mathbb{Z}_2$-flow on each edge-deleted subgraph, which in turn
implies that critical snarks in the sense of DeVos et al. and
\v S\' amal \cite{dVNR, S-full} coincide with $4$-edge-critical
snarks of da Silva and Lucchesi \cite{dSL}. The same family of
snarks, under the name of $4$-flow-critical snarks, occurs in a
recent survey of edge-uncolourability measures by Fiol et al.
\cite[Section~4.1]{FMS} without any reference to previous work.

To sum up, during the past twenty years critical snarks were
rediscovered several times in one form or another, and within
different contexts. Although partial results concerning the
relationship between the different definitions exist, see
Freitas et al. \cite[Theorem~3.1]{dFdSL} and Fiol et al.
\cite[Theorem~4.5]{FMS}, it has not been fully realised that
all of them actually coincide with either critical or
bicritical snarks of Nedela and \v Skoviera \cite{NS}. In the
present paper we therefore establish this fact in full
generality and explain the relations between various versions
of criticality in detail. Instead of just proving that critical
snarks coincide with $4$-edge-critical snarks and bicritical
snarks coincide with $4$-vertex-critical snarks we investigate
the corresponding reduction operations locally on the pairs of
vertices and show that different operations have the same
effect. The advantage of this approach becomes evident also in
the fact that these operations can be applied to strong snarks
introduced by Jaeger \cite{Jaeger:CDC, Jaeger:5-col} (see also
Brinkmann et al. \cite[Section~4.6]{BGHM}), which gives rise to
a necessary and sufficient condition for a snark to be strong.

\section{Definitions and preliminaries}\label{sec:def}

In this section we fix the terminology for the rest of this
paper. Our graphs may have parallel edges and loops.
Occasionally we also allow dangling edges. We assume the basic
knowledge of edge-colourings and nowhere-zero flows on graphs.
For more details we recommend the reader to consult Diestel
\cite{D}.

As mentioned above, we define a \textit{snark} to be a
connected cubic graph that does not admit a proper
edge-colouring with three colours; equivalently, a snark is a
connected cubic graph that does not admit a nowhere-zero
$4$-flow. The smallest snark is the \textit{dumbbell graph},
which has two vertices joined by an edge and a loop attached to
each vertex. The smallest bridgeless snark is, of course, the
Petersen graph.

We now introduce the operations related to critical and
flow-critical snarks. Given a graph $G$ and an edge $e$ of $G$,
we let $G-e$ denote the subgraph of $G$ obtained by the removal
of $e$, and $G\sim e$ the cubic graph which arises from $G-e$
by suppressing the resulting two vertices of degree two. By
$G/e$ we denote the graph obtained from $G$ by the contraction
of $e$.

Let $u$ and $v$ be two distinct vertices of $G$. By $G-\{u,v\}$
we denote a graph created from $G$ by removing $u$ and $v$ but
retaining the dangling edges. Note that this deviation from the
standard meaning of the vertex removal has no effect on the
existence of a $3$-edge-colouring but is important for the
existence of nowhere-zero flows. By $G/\{u,v\}$ we denote the
graph obtained from $G$ by identifying $u$ and $v$. If $u$ and
$v$ are connected by an edge~$e$, then $G/e$ arises from
$G/\{u,v\}$ by removing the loop resulting from~$e$.

We proceed to the central concepts of this paper. Following
Nedela and \v Skoviera \cite{NS} we define a snark $G$ to be
\textit{critical} if $G-\{u,v\}$ is $3$-edge-colourable for
every pair of adjacent vertices $u$ and $v$, and
\textit{bicritical} if $G-\{u,v\}$ is $3$-edge-colourable for
every pair of distinct vertices $u$ and $v$. Thus every
bicritical snark is critical, but not necessarily vice versa.

Let $G$ be an arbitrary graph, not necessarily cubic, and let
$k\ge 2$ be an integer. Following da Silva and Lucchesi
\cite{dSL} (see also Carneiro et al. \cite{CdSM}) we say that
$G$ is \textit{$k$-edge-critical} if $G$ does not admit a
nowhere-zero $k$-flow but for each edge $e$ the graph $G/e$
does. We further say that $G$ is \textit{$k$-vertex-critical}
if it does not admit a nowhere-zero $k$-flow but for any two
distinct vertices $u$ and $v$ the graph $G/\{u,v\}$ does. We
now apply these definitions to snarks with $k=4$. Taking into
account the fact that the presence of a loop at a vertex has no
effect on the existence of a nowhere-zero $k$-flow, we can
define a snark to be \textit{$4$-edge-critical} if $G/\{u,v\}$
has a nowhere-zero $4$-flow for any two adjacent vertices, and
\textit{$4$-vertex-critical} if $G/\{u,v\}$ has a nowhere-zero
$4$-flow for any two distinct vertices. Although these snarks
can be encountered in the literature under different names, we
have decided to adopt the terminology used in \cite{CdSM, dSL}
and also in the snark section of the database ``House of
Graphs'' \cite{BCGM:House}.

Before proceeding further we need to mention an important
connection of critical snarks to irreducible snarks, which were
introduced in \cite{NS} and thoroughly studied in~\cite{ChS}. A
snark $G$ is said to be \textit{$k$-irreducible} for a given
integer $k\ge 1$ if removing fewer than $k$ edges from $G$ does
not produce a component with chromatic index $4$ which could be
completed to a cubic graph $H$ of order smaller than $G$. The
resulting graph $H$ is a snark and is called a
\textit{$k$-reduction} $G$. A snark is called
\textit{irreducible} if it is $k$-irreducible for every $k\ge
1$. The following result was proved in \cite{NS}.

\begin{theorem}\label{thm:irred}
The following statements are true for an arbitrary snark $G$.
\begin{itemize}
\item[{\rm (i)}] If $1\leq k\leq 4$, then $G$ is
    $k$-irreducible if and only if $G$ is either cyclically
    $k$-connected or the dumbbell graph.
\item[{\rm (ii)}] If $k\in\{5,6\}$, then $G$ is
    $k$-irreducible if and only if it is critical.
\item[{\rm (iii)}] If $k\geq 7$, then $G$ is
    $k$-irreducible if and only if it is bicritical.
\end{itemize}
\end{theorem}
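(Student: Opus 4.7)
The plan is to analyze, for each value of $k$, which edge sets $F$ of size less than $k$ can produce a component $C$ of $G-F$ having chromatic index $4$ together with a completion to a cubic graph of smaller order than $G$. Since forests are $3$-edge-colourable, such a reducing component must contain a cycle, so the relevant deletions are essentially cyclic edge-cuts (possibly after discarding irrelevant tree components of $G-F$).

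For part (i) with $k \le 4$, I would prove the forward direction by a parity-lemma argument: a cyclic $j$-edge-cut with $j < k \le 4$ separates $G$ into two cyclic parts $A$, $B$, at least one of which, say $A$, must have chromatic index $4$ because the boundary colour patterns on at most three dangling edges are rigid and $G$ being a snark forces incompatibility on at least one side. The dangling edges of $A$ can then be absorbed by at most two auxiliary vertices (pairing, inserting a loop, or forming a triangle), producing a cubic snark $H$ strictly smaller than $G$ since the discarded side is cyclic. Conversely, if $G$ is cyclically $k$-connected or is the dumbbell, then removing fewer than $k$ edges leaves at most one component with a cycle; the remaining components are forests and thus $3$-edge-colourable, so no valid reducing component exists.

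For parts (ii) and (iii), part (i) already forces $G$ to be cyclically $4$-connected (or the dumbbell), so the additional work concerns edge sets of sizes $4$, $5$, and $6$. The heart of the proof is a classification lemma asserting that in a cyclically $4$-connected snark, the only $4$-, $5$-, or $6$-element edge sets whose removal yields a chromatic-index-$4$ component with a strictly smaller cubic completion are those arising from a vertex pair $\{u,v\}$: namely, the four non-internal edges incident to an adjacent pair (size $4$), those edges together with the internal edge $uv$ (size $5$), and the six edges incident to an arbitrary pair (size $6$). In each case the large component is essentially $G-\{u,v\}$, whose chromatic index is $4$ exactly when $G$ fails to be critical (for adjacent pairs), respectively bicritical (for general pairs), at $\{u,v\}$. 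Competing configurations with small sides of more than two vertices, such as those around triangles, $4$-cycles, or short paths, must be ruled out either because cyclic $4$-connectedness forces the two sides' colourings to combine, or because the resulting completion is not strictly smaller than $G$.

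The main obstacle I expect lies in this classification lemma: one has to verify, case by case according to the possible topologies of a small side with few vertices and appropriate internal structure, that each alternative reduction either coincides with a vertex-pair removal or is unavailable. Careful handling of loops, parallel edges, and the dumbbell as degenerate base cases adds further bookkeeping. Once the lemma is secured, the equivalences ``$5$- or $6$-irreducible $\Leftrightarrow$ critical'' and ``$k$-irreducible for $k \ge 7$ $\Leftrightarrow$ bicritical'' follow cleanly, the latter additionally using that for bicritical $G$ any further removed edges leave subgraphs of the already $3$-edge-colourable $G-\{u,v\}$, hence no new reductions arise for $k \ge 8$.
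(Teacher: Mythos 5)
First, a point of order: the paper contains no proof of Theorem~\ref{thm:irred}. It is quoted from Nedela and \v Skoviera with the sentence ``The following result was proved in \cite{NS}'', so there is no in-paper argument to compare yours against; your plan can only be judged on its own terms. On those terms the overall architecture (Parity Lemma for cuts of size at most $3$, reduction of the sizes $4$--$6$ to vertex pairs, subgraph argument for $k\ge 7$) is the right shape, but two steps as written would fail.

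In the converse direction of (i) you conclude that after deleting fewer than $k\le 4$ edges from a cyclically $k$-connected snark ``the remaining components are forests \dots so no valid reducing component exists.'' This ignores the unique cyclic component, which typically \emph{does} have chromatic index $4$ (for instance $G$ itself when the deleted set is not an edge cut, or $G$ minus a vertex when you delete the three edges at a vertex). What blocks the reduction there is not colourability but the order requirement: that component omits at most one vertex of $G$, and its degree deficiency is odd, so every cubic completion must add at least one vertex and cannot have order smaller than $G$. The second and more serious gap is that your classification lemma for edge sets of size $4$, $5$ and $6$ carries the entire weight of (ii) and (iii) and is only asserted; worse, the reason you offer for dismissing cyclic $4$- and $5$-edge-cuts whose small side is not a vertex pair --- that ``cyclic $4$-connectedness forces the two sides' colourings to combine'' --- is false: snarks with cyclic $4$-cuts exist precisely because two colourable sides need not have compatible boundary patterns. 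Such a cut is a genuine candidate reduction (pair the four dangling edges into two new edges and you obtain a cubic completion of order $|A|<|G|$), so you must actually prove that a critical snark cannot have a cyclic $4$- or $5$-cut with an uncolourable side. The idea your plan is missing, and which replaces the topological case analysis entirely, is a counting argument: if $C$ is an uncolourable component produced by deleting at most $5$ edges (resp.\ at most $k-1$ edges) and admitting a strictly smaller cubic completion, then $C$ must omit at least two vertices of $G$, and since every omitted vertex outside an edge of the omitted set costs three deleted edges, among the omitted vertices one finds an adjacent pair $u,v$ (resp.\ an arbitrary pair); then $C$ is a subgraph of $G-\{u,v\}$, which is $3$-edge-colourable by criticality (resp.\ bicriticality), a contradiction. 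With that substitution, and with the parity bookkeeping for completions done explicitly, your plan can be completed.
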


As a direct consequence of Theorem~\ref{thm:irred} we obtain
the fact that a snark is irreducible if and only if it is
bicritical. Furthermore, in a bicritical snark the removal of
every nontrivial edge-cut (one that is different from three
edges incident with a vertex) produces only colourable
components. The following corollary is also important.

\begin{corollary}
Every critical snark is cyclically $4$-edge-connected and has
girth at least $5$.
\end{corollary}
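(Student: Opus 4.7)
The plan is to derive both conclusions from Theorem~\ref{thm:irred}. Let $G$ be a critical snark. By part~(ii) of the theorem, $G$ is $5$-irreducible, and because the condition ``no reduction using fewer than $k$ edges'' becomes weaker as $k$ decreases, $G$ is also $4$-irreducible. Part~(i) with $k=4$ then says that $G$ is either cyclically $4$-edge-connected or the dumbbell. The dumbbell is excluded as a degenerate case (it has only two vertices and contains loops, which would trivialise the corollary's conclusions), so $G$ is cyclically $4$-edge-connected.

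For the girth bound I would argue by contradiction: suppose that $G$ contains a cycle $C$ of length $\ell\le 4$. Since $G$ is cubic, the edges of $G$ with exactly one endpoint on $C$ form an edge-cut of size $\ell$. The strategy is to remove this cut and complete the side not containing $C$ to a cubic graph on $|V(G)|-2$ vertices, obtaining a smaller snark $H$. This exhibits $G$ as having a reduction using $\ell\le 4<5$ edges, which contradicts $5$-irreducibility.

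When $\ell=3$ the completion is unique: one adds a single new vertex joined to the three dangling edges. The classical $\Delta$--$Y$ colour-matching argument (the three edges leaving a triangle in any $3$-edge-colouring of a cubic graph receive three pairwise distinct colours, and any such assignment extends back over the triangle) shows that $H$ is $3$-edge-colourable if and only if $G$ is; so $H$ is indeed a smaller snark. When $\ell=4$ there are three pairings of the four dangling edges into two sets of two, each giving a completion $H_i$ ($i=1,2,3$) by attaching the two pairs to two new vertices joined by an edge. A short case analysis of the colour patterns that the four outgoing edges of a $4$-cycle can exhibit in a putative $3$-edge-colouring of $G$ shows that $G$ is $3$-edge-colourable if and only if at least one of the $H_i$ is; hence when $G$ is a snark, all three $H_i$ are snarks of strictly smaller order, providing the desired reduction.

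The main obstacle is the $\ell=4$ case, which requires verifying that uncolourability of $G$ transfers to every one of the three $4$-cycle completions; this is handled by enumerating the (few) possible colour patterns that a $3$-edge-colouring induces on a $4$-cycle together with its four outgoing edges, and checking that each pattern yields a $3$-edge-colouring of some $H_i$. The cyclic $4$-edge-connectivity and the $\ell=3$ case are more immediate consequences of Theorem~\ref{thm:irred} and the standard $\Delta$--$Y$ reduction.
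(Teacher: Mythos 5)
Your derivation of cyclic $4$-edge-connectivity is correct and is the intended one: criticality gives $5$-irreducibility by Theorem~\ref{thm:irred}(ii), $k$-irreducibility is monotone decreasing in $k$, and part~(i) with $k=4$ finishes the job (the dumbbell being a degenerate exception that the paper also leaves implicit). The triangle case of the girth bound is likewise fine, modulo the parity-lemma detail needed to show that the non-triangle component really has chromatic index~$4$.

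The quadrilateral case, however, contains a genuine gap. Your key claim --- that $G$ is $3$-edge-colourable if and only if at least one of the three completions $H_i$ is, so that for a snark $G$ all three $H_i$ are snarks and the $4$-cycle yields a reduction using $4<5$ edges --- is false, and the case analysis you defer would expose this. Write $C=u_1u_2u_3u_4$ for the quadrilateral, $f_i$ for the edge leaving $u_i$, and $R$ for the component left after deleting $f_1,\dots,f_4$. By the parity lemma, a $3$-edge-colouring of $R$ puts on the four dangling edges either one colour four times, or two colours twice each; a short check shows that exactly the patterns ``all equal'' and ``equal on adjacent pairs'' extend across $C$, while the pattern $\phi(f_1)=\phi(f_3)\neq\phi(f_2)=\phi(f_4)$ (equal on \emph{opposite} pairs) does not. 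Hence $G$ can be a snark while $R$ is $3$-edge-colourable, provided every colouring of $R$ induces the opposite-pairs pattern; in that situation removing the four edges around $C$ produces \emph{no} component of chromatic index~$4$, so no contradiction with $5$-irreducibility arises, and moreover the two completions $H_i$ that pair $f_1$ with an $f_j$ of the other colour are $3$-edge-colourable even though $G$ is not. (This is the same phenomenon behind the classical quadrilateral reduction of Isaacs: deleting a pair of opposite edges of $C$ and suppressing always yields a smaller snark, but that smaller snark is not obtained as an uncolourable component of $G$ minus at most four edges.) Ruling out the opposite-pairs configuration genuinely requires the hypothesis that $G$ is critical --- i.e., that $G-\{u,v\}$ is colourable for \emph{every} adjacent pair, including pairs involving the neighbours $w_i$ of $C$ --- and this is exactly the content of the corresponding argument in \cite{NS}, which the paper imports via Theorem~\ref{thm:irred} rather than reproving. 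As it stands, your proof establishes girth at least $4$ (which already follows from cyclic $4$-edge-connectivity) but not girth at least $5$.
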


\section{Critical and flow-critical snarks}

We start by exploring the effect of various operations
occurring in the definitions of critical and flow-critical
snarks.

\begin{theorem}\label{thm:local}
Let $G$ be a snark and let $u$ and $v$ be two distinct vertices
of $G$. The following statements {\rm (i)-(iii)} are
equivalent. If, in addition, $u$ and $v$ are adjacent and
joined by an edge $e$, then all the statements  {\rm (i)-(vi)}
are equivalent.
\begin{itemize}
 \item[{\rm (i)}] $G-\{u,v\}$ is $3$-edge-colourable.

 \item[{\rm (ii)}] $G-\{u,v\}$ admits a nowhere-zero
     $4$-flow.

 \item[{\rm (iii)}] $G/\{u,v\}$ admits a nowhere-zero
     $4$-flow.

\item[{\rm (iv)}] $G-e$ admits a nowhere-zero $4$-flow.

\item[{\rm (v)}] $G/e$ admits a nowhere-zero $4$-flow.

\item[{\rm (vi)}] $G\sim e$ is $3$-edge-colourable.
\end{itemize}
\end{theorem}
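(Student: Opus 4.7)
The plan is to prove all six equivalences by working throughout with nowhere-zero $\mathbb{Z}_2\times\mathbb{Z}_2$-flows, which by the classical Tutte correspondence exist if and only if nowhere-zero $4$-flows exist and, on a cubic graph (possibly with dangling half-edges), if and only if a proper $3$-edge-colouring exists. This correspondence immediately yields three of the required equivalences. First, (i) $\Leftrightarrow$ (ii) holds because $G-\{u,v\}$ is cubic with dangling edges. Second, (iv) $\Leftrightarrow$ (vi) holds because any nowhere-zero $4$-flow on $G-e$ must assign equal values to the two edges meeting at each degree-$2$ vertex, so the suppression operation transfers it to a $4$-flow on the cubic graph $G\sim e$, and conversely any $4$-flow on $G\sim e$ lifts back by giving these common values. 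Third, (iii) $\Leftrightarrow$ (v) holds because $G/e$ differs from $G/\{u,v\}$ only by the loop produced from $e$, and a loop contributes $0$ to $\mathbb{Z}_2^2$-conservation and can be freely inserted (with any nonzero value) or deleted.

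It then remains to bridge these two blocks. For (ii) $\Leftrightarrow$ (iii), which does not require adjacency of $u$ and $v$, I would sum the $\mathbb{Z}_2^2$-conservation identity at all internal vertices of $G-\{u,v\}$; each internal edge is counted twice and so contributes $0$, and the sum collapses to the statement that the flow values on the dangling edges add up to $0$. Identifying all dangling endpoints at a single vertex $w$ therefore produces a nowhere-zero $4$-flow on $G/\{u,v\}$ (assigning any nonzero value to the loop, if one is created), and the reverse implication is a mere restriction. At this point conditions (i), (ii), (iii), and (v) are all equivalent, as are (iv) and (vi).

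The only remaining step, and the only one that uses the snark hypothesis, is (v) $\Rightarrow$ (iv); the converse is a trivial identification of $u$ and $v$ in a flow on $G-e$. Let $a,b$ denote the neighbours of $u$ other than $v$, and $c,d$ the analogous neighbours of $v$. Given a nowhere-zero $4$-flow $f$ on $G/e$, I would regard $f$ as a function on $E(G)\setminus\{e\}$, which is also the edge set of $G-e$. The goal is to show that $f(ua)=f(ub)$ and $f(vc)=f(vd)$, for then $f$ automatically satisfies conservation at the degree-$2$ vertices $u$ and $v$ of $G-e$ and becomes the sought nowhere-zero $4$-flow. To prove this I would argue by contradiction: if $f(ua)\neq f(ub)$, set $\eta:=f(ua)+f(ub)\neq 0$; conservation at the identified vertex $w$ of $G/e$ forces $f(vc)+f(vd)=\eta$ as well. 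Assigning $f(e)=\eta$ then extends $f$ to a nowhere-zero $\mathbb{Z}_2\times\mathbb{Z}_2$-flow on the whole of $G$, contradicting the assumption that $G$ is a snark. This short extension argument is the crux of the theorem; every other step is bookkeeping with dangling edges, loops, and degree-$2$ vertices, so the main obstacle is mostly to keep the notation consistent across the six closely related graphs.
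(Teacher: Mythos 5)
Your proposal is correct and is essentially the paper's own argument: both rest on Tutte's $\mathbb{Z}_2\times\mathbb{Z}_2$ correspondence, Kirchhoff summation over the dangling edges, the invariance of flows under loops and subdivisions, and the key observation that a flow or colouring failing to balance at $u$ and $v$ would extend to all of $G$, contradicting that $G$ is a snark. The only difference is organizational: the paper routes the implications as two cycles and invokes the snark hypothesis in (i) $\Rightarrow$ (vi) in colouring language, whereas you prove pairwise equivalences and invoke it in (v) $\Rightarrow$ (iv) in flow language, but the underlying extension argument is identical.
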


\begin{proof}
(i) $\Rightarrow$ (ii): Assume that the graph $G-\{u,v\}$ is
$3$-edge-colourable. If the colours for this colouring are
taken to be the non-zero elements of the group
$\mathbb{Z}_2\times \mathbb{Z}_2$, then the colouring is at the
same time a nowhere-zero $\mathbb{Z}_2\times \mathbb{Z}_2$-flow
on $G-\{u,v\}$. Tutte's equivalence theorem (see
\cite[Theorem~6.3.3 and Corollary 6.3.2]{D}) now implies that
$G-\{u,v\}$ admits a nowhere-zero $4$-flow.

\medskip

(ii) $\Rightarrow$ (iii): Assume that $G-\{u,v\}$ admits a
nowhere-zero $4$-flow. Without loss of generality we may assume
that the underlying orientation has all the dangling edges of
$G-\{u,v\}$ directed outward. Kirchhoff's law now implies that
the sum of values on the dangling edges is $0$. Since
$G/\{u,v\}$ arises from $G-\{u,v\}$ by attaching the dangling
edges to a new vertex, we infer that the induced valuation is a
nowhere-zero $4$-flow on $G/\{u,v\}$.

\medskip

(iii) $\Rightarrow$ (i): Assume that $G/\{u,v\}$ admits a
nowhere-zero $4$-flow. By Tutte's equivalence theorem it also
admits a nowhere-zero $\mathbb{Z}_2\times \mathbb{Z}_2$-flow.
If we regard the flow values in $\mathbb{Z}_2\times
\mathbb{Z}_2$ as colours and remove the vertex $u=v$ from
$G/\{u,v\}$, we immediately obtain a 3-edge-colouring of
$G-\{u,v\}$.

\medskip

For the rest of the proof we assume that the vertices $u$ and
$v$ are joined by an edge $e$.

\medskip

(i) $\Rightarrow$ (vi): Assume that $G-\{u,v\}$ is $3$-edge
colourable. Since $G$ is a snark, every $3$-edge-colouring of
$G-\{u,v\}$ must assign the same colour to both dangling edges
formerly incident with one of $u$ and $v$. A $3$-edge-colouring
of $G-\{u,v\}$ is at the same time a nowhere-zero
$\mathbb{Z}_2\times \mathbb{Z}_2$-flow, so Kirchhoff's law
implies that the dangling edges incident with the other vertex
also have the same colour. It follows that every
$3$-edge-colouring of $G-\{u,v\}$ induces a $3$-edge-colouring
of $G\sim e$.

\medskip

(vi) $\Rightarrow$ (iv): Assume that $G\sim e$ is
$3$-edge-colourable. Then $G\sim e$ has a nowhere-zero
$\mathbb{Z}_2\times \mathbb{Z}_2$-flow and hence a nowhere-zero
$4$-flow. Since $G-e$ is a subdivision of $G\sim e$, it follows
that $G-e$ has a nowhere-zero $4$-flow as well.

\medskip

(iv) $\Rightarrow$ (v): If $G-e$ has a nowhere-zero $4$-flow,
then so does the graph obtained from $G-e$ by identifying $u$
and $v$, which is exactly $G/e$.

\medskip

(v) $\Rightarrow$ (i): If $G/e$ has a nowhere-zero $4$-flow,
then so does the graph obtained from $G/e$ by removing the
vertex corresponding to $e$ and by retaining the dangling
edges. Obviously, the latter graph is isomorphic to
$G-\{u,v\}$. By Tutte's equivalence theorem, $G-\{u,v\}$ has a
nowhere-zero $\mathbb{Z}_2\times \mathbb{Z}_2$-flow, which at
the same time is a $3$-edge-colouring of $G-\{u,v\}$.
\end{proof}

\medskip

\noindent\textbf{Remark.} The equivalence between statements
(i) and (vi) of Theorem~\ref{thm:local} was first observed in
\cite[Proposition~4.2]{NS}.

\bigskip

The next two theorems are immediate consequences of
Theorem~\ref{thm:local} and Theorem~\ref{thm:irred}.

\begin{theorem}\label{thm:critical}
The following statements are equivalent for an arbitrary snark
$G$.
\begin{itemize}
\item[{\rm (i)}] $G$ is critical.

\item[{\rm (ii)}] $G$ is $4$-edge-critical.

\item[{\rm (iii)}] $G$ is $5$-irreducible.

\item[{\rm (iv)}] $G$ is $6$-irreducible.

\item[{\rm (v)}] $G\sim e$ is $3$-edge-colourable for each
    edge $e$ of $G$.
\end{itemize}
\end{theorem}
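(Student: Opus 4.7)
The plan is to derive the five equivalences by invoking the pointwise equivalences of Theorem~\ref{thm:local} on every edge of $G$, together with clause~(ii) of Theorem~\ref{thm:irred}. No new ideas are needed; the argument is essentially a piece of bookkeeping.

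I would begin by rewriting each of the conditions (i), (ii), and (v) of the theorem as a universal statement quantified over the edges of $G$. Condition~(i) says that for every edge $e=uv$ of $G$, the graph $G-\{u,v\}$ is $3$-edge-colourable, which is statement~(i) of Theorem~\ref{thm:local} applied to the endpoints of $e$. Condition~(ii) says that for every edge $e=uv$, the graph $G/e$ admits a nowhere-zero $4$-flow, which is statement~(v) of Theorem~\ref{thm:local} for that~$e$. Condition~(v) is statement~(vi) of Theorem~\ref{thm:local} quantified over all edges~$e$. The equivalence of (i), (v), and~(vi) in Theorem~\ref{thm:local}, applied edge by edge, then yields (i)~$\Leftrightarrow$~(ii)~$\Leftrightarrow$~(v) of the present theorem.

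Next, I would appeal to Theorem~\ref{thm:irred}~(ii): for $k\in\{5,6\}$, a snark is $k$-irreducible if and only if it is critical. This gives (i)~$\Leftrightarrow$~(iii) and (i)~$\Leftrightarrow$~(iv) directly, closing the cycle of equivalences.

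The only minor subtlety worth flagging lies in the quantifiers: $4$-edge-criticality is phrased over all edges of $G$, whereas criticality is phrased over all adjacent pairs of vertices. These two quantifications carry the same information (each edge determines its pair of endpoints, and parallel edges simply repeat the same pair with the same consequence by Theorem~\ref{thm:local}), so the translation is transparent. I therefore expect the proof to amount to a one-line appeal to Theorem~\ref{thm:local} together with a one-line appeal to Theorem~\ref{thm:irred}, with no genuine obstacle.
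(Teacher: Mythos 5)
Your proposal is correct and matches the paper exactly: the authors give no separate proof, stating only that Theorem~\ref{thm:critical} is an immediate consequence of Theorem~\ref{thm:local} (applied edge by edge to the endpoints of each edge) and of Theorem~\ref{thm:irred}(ii). Your bookkeeping of the quantifiers and your identification of which clauses of Theorem~\ref{thm:local} correspond to which conditions is precisely the intended argument.
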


The equivalence (i) $\Leftrightarrow$ (ii) in
Theorem~\ref{thm:critical} explains that the property of being
a $4$-edge-critical snark is the same as being critical. This
fact has been recently observed by de Freitas et al.
\cite[Theorem~3.1]{dFdSL} and independently by Fiol et al.
\cite[Theorem~4.5]{FMS}. Surprisingly, an analogous statement
for $4$-edge-critical and bicritical snarks has so far escaped
attention. We formulate this fact in the next theorem, thereby
completing the relationship between critical and flow-critical
snarks.

\begin{theorem}\label{thm:bicritical}
The following statements are equivalent for an arbitrary snark
$G$.
\begin{itemize}
\item[{\rm (i)}] $G$ is bicritical.

\item[{\rm (ii)}] $G$ is $4$-vertex-critical.

\item[{\rm (iii)}] $G$ is $7$-irreducible.

\item[{\rm (iv)}] $G$ is irreducible.
\end{itemize}
\end{theorem}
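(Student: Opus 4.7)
The plan is to obtain each equivalence mechanically by combining Theorem~\ref{thm:local}, which transfers information between local operations on a single pair of vertices, with Theorem~\ref{thm:irred}, which links $k$-irreducibility to criticality and bicriticality.

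For the equivalence (i) $\Leftrightarrow$ (ii), I would apply the equivalence (i) $\Leftrightarrow$ (iii) of Theorem~\ref{thm:local} to every pair $\{u,v\}$ of distinct vertices of $G$. That part of Theorem~\ref{thm:local} says precisely that $G-\{u,v\}$ is $3$-edge-colourable if and only if $G/\{u,v\}$ carries a nowhere-zero $4$-flow. Quantifying over all such pairs translates the defining condition of bicriticality into the defining condition of $4$-vertex-criticality, yielding the equivalence. The equivalence (i) $\Leftrightarrow$ (iii) is then immediate from Theorem~\ref{thm:irred}(iii) applied with $k=7$.

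For (iii) $\Leftrightarrow$ (iv), the implication (iv) $\Rightarrow$ (iii) holds directly by the definition of irreducibility, so the only content is in (iii) $\Rightarrow$ (iv). I would use two observations. First, $k$-irreducibility passes to smaller $k$: the condition ``removing fewer than $k$ edges from $G$ fails to produce a problematic component'' immediately implies the same statement for any $k' \leq k$, so $7$-irreducibility at once yields $k$-irreducibility for all $1 \leq k \leq 7$. Second, for $k \geq 7$, Theorem~\ref{thm:irred}(iii) characterises $k$-irreducibility by bicriticality, and $G$ is bicritical by the already-established implication (iii) $\Rightarrow$ (i). Together these two observations cover every $k \geq 1$, so $G$ is irreducible.

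The whole argument is short bookkeeping built on top of Theorems~\ref{thm:local} and~\ref{thm:irred}; the only ingredient not already in the literature is the equivalence (i) $\Leftrightarrow$ (ii), and even that reduces to a single invocation of Theorem~\ref{thm:local}. I do not foresee any genuine obstacle; the one point to double-check is the direction of monotonicity of $k$-irreducibility, which is however immediate from the definition.
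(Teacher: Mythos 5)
Your proposal is correct and follows exactly the route the paper intends: the paper states Theorem~\ref{thm:bicritical} as an immediate consequence of Theorems~\ref{thm:local} and~\ref{thm:irred}, and your argument simply fills in the routine details (quantifying the local equivalence (i) $\Leftrightarrow$ (iii) of Theorem~\ref{thm:local} over all pairs of distinct vertices, invoking Theorem~\ref{thm:irred}(iii), and using the evident monotonicity of $k$-irreducibility in $k$). No gaps.
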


It is worth mentioning that there exist \textit{strictly
critical} snarks -- snarks that are critical but not
bicritical. The first known examples were found independently
by Chladn\'y in his Master Thesis (see \cite{ChS}) and by
Steffen \cite{Steffen:strict}. Somewhat later, Gr\"unewald and
Steffen \cite{GS} presented a construction of cyclically
$5$-edge-connected strictly critical snarks. Strictly critical
snarks whose cyclic connectivity equals $4$ were completely
characterised by Chladn\'y and \v Skoviera
\cite[Section~6]{ChS}, providing a deeper insight into what
makes snarks strictly critical. They also showed that there
exist strictly critical snarks of order $n$ for every even
integer $n\ge 32$. On the other hand, an exhaustive computer
search performed by Brinkmann and Steffen \cite{BS} revealed
that there are no strictly critical snarks of any order smaller
than $32$.

Strictly critical snarks have resurfaced within the
flow-critical context in a recent work of Carneiro et al.
\cite{CdSM}. They devised an exponential-time algorithm that
verifies whether a snark is $4$-edge-critical or
$4$-vertex-critical, and applied the algorithm to the body of
all cyclically $4$-edge-connected snarks of order at most $36$
with girth at least $5$ generated by Brinkmann et al.
\cite{BGHM}. The use of this algorithm allowed them to compile
complete lists of critical, bicritical, and strictly critical
snarks of every order not exceeding $36$. The lists are
available in the snark section of the database ```House of
Graphs'' \cite{BCGM:House}. It transpires that among all snarks
of order at most $36$ there are exactly $55172$ critical
snarks, but only $846$ of them are strictly critical, just
slightly over $1.5$ percent. We have verified that all of them
have cyclic connectivity $4$. (The number $837$ of strictly
critical snarks of order not exceeding $36$ mentioned in
\cite[Section~3]{CdSM} is incorrect.)

Theorems~\ref{thm:critical} and~\ref{thm:bicritical} suggest
that the algorithm of Carneiro et al. \cite{CdSM} to check
flow-criticality of a given snark $G$ can be simplified if we
consider criticality instead. Indeed, the algorithm for
flow-criticality fixes an orientation of $G$ and for a chosen
pair $(u,v)$ of vertices it attempts to construct a weight
function with values in $\mathbb{Z}_4$ under which the
Kirchhoff law fails only at $u$ and $v$. This requires, in
particular, checking several possibilities for balanced weight
assignments at each vertex $w$ different from $u$ and $v$. Our
Theorems~\ref{thm:critical} and~\ref{thm:bicritical} imply that
if we verify criticality instead, then no orientation is
required, and for each vertex there is only one possibility for
a balanced assignment, up to a permutation of colours. This
approach might prove useful in testing irreducibility of large
individual snarks or large sets of snarks.

\section{Application to strong snarks}

We finish this paper by applying Theorem~\ref{thm:local} to
strong snarks introduced by Jaeger in \cite{Jaeger:CDC,
Jaeger:5-col}. Using the notation introduced in
Section~\ref{sec:def} we recall that a snark $G$ is said to be
\textit{strong} if $G\sim e$ is a snark for each edge $e$.

Our final result is a direct consequence of the equivalence (i)
$\Leftrightarrow$ (vi) from Theorem~\ref{thm:local}.

\begin{theorem}\label{thm:strong}
A snark $G$ is strong if and only if $G-\{u,v\}$ has chromatic
index $4$ for every pair of adjacent vertices $u$ and $v$.
\end{theorem}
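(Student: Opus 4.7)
The plan is to deduce Theorem~\ref{thm:strong} as an immediate corollary of the equivalence (i)$\Leftrightarrow$(vi) in Theorem~\ref{thm:local}, applied edge-by-edge and taken in its contrapositive form. The first step is to unpack both sides of the desired biconditional. On the right, since $G-\{u,v\}$ is subcubic, Vizing's theorem forces its chromatic index into $\{3,4\}$, so ``$G-\{u,v\}$ has chromatic index $4$'' is just the negation of ``$G-\{u,v\}$ is $3$-edge-colourable''. On the left, the construction of $G\sim e$ yields a cubic graph, so ``$G\sim e$ is a snark'' amounts to ``$G\sim e$ is connected and not $3$-edge-colourable''.

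The second step is the direct application of Theorem~\ref{thm:local}: for each edge $e=uv$, the equivalence (i)$\Leftrightarrow$(vi) gives, contrapositively, that $G-\{u,v\}$ has chromatic index $4$ if and only if $G\sim e$ has chromatic index $4$. Quantifying this equivalence over all edges $e$ of $G$ yields precisely the desired statement, up to the connectedness qualifier in the definition of ``snark''.

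The third step is to dispose of that connectedness qualifier. Since $G$ is a connected cubic graph and $G\sim e$ is obtained from $G-e$ by suppressing degree-$2$ vertices, $G\sim e$ is connected whenever $e$ is not a bridge of $G$; for a strong snark, in which each $G\sim e$ must be a (cubic) snark, it is tacitly understood that the operation $G\sim e$ is well defined. In typical applications this connectedness is automatic from the cyclic edge-connectivity guaranteed by critical-type hypotheses, so the passage from ``$G\sim e$ has chromatic index $4$'' to ``$G\sim e$ is a snark'' is free.

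I do not expect any real obstacle: the entire content of Theorem~\ref{thm:strong} is encoded in the equivalence (i)$\Leftrightarrow$(vi) of Theorem~\ref{thm:local}, and only a one-sentence remark about connectedness of $G\sim e$ is needed to match the literal definition of ``strong''.
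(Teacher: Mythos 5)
Your proposal is exactly the paper's argument: Theorem~\ref{thm:strong} is stated there as a direct consequence of the equivalence (i) $\Leftrightarrow$ (vi) of Theorem~\ref{thm:local}, quantified over all edges, which is precisely your first two steps. The only divergence is your third step on connectedness of $G\sim e$, where your justification is shaky --- strong snarks carry no critical-type hypothesis and need not be cyclically $4$-edge-connected (the paper says so explicitly), so connectedness of $G\sim e$ when $e$ is a bridge is not ``free'' --- but the paper's own one-line proof silently passes over this same point, so this does not separate your argument from theirs.
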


At the first glance Theorem~\ref{thm:strong} might seem to
suggests that strong snarks are in some sense similar to
critical snarks except that they lie on the other side of the
colourability spectrum. Unfortunately, this is not the case.
Indeed, there is no reason for strong snarks to be nontrivial
in the usual sense, that is, to be cyclically
$4$-edge-connected and have girth at least $5$. For example,
replacing any vertex of a strong snark with a triangle produces
another strong snark.

\subsection*{Acknowledgments}
The first author was partially supported by VEGA 1/0876/16. The
second author was partially supported by APVV-15-0220.

\end{document}